\documentclass[leqno,a4paper]{article}

\usepackage{layout}
\setlength{\oddsidemargin}{10pt} 
\setlength{\textwidth}{449pt} 
\setlength{\topmargin}{-40pt} 
\setlength{\textheight}{672pt}

\usepackage[OT1]{fontenc}
\usepackage{color}
\usepackage{amsthm,amsmath,graphicx,latexsym,amssymb,amscd,amsfonts,enumerate, bm}
\usepackage[colorlinks,citecolor=blue,linkcolor=red,urlcolor=blue]{hyperref}

\theoremstyle{plain}
\newtheorem{theorem}{Theorem}
\newtheorem{proposition}[theorem]{Proposition}
\newtheorem{lemma}[theorem]{Lemma}

\newtheorem{remark}[theorem]{Remark}
\newtheorem{example}[theorem]{Example}

\makeatletter
\def\section{\@startsection {section}{1}{\z@}{-3.5ex plus -1ex minus 
-.2ex}{2.3ex plus .2ex}{\large\bf}}
\makeatother

\makeatletter
\def\subsection{\@startsection {subsection}{1}{\z@}{-3.5ex plus -1ex minus 
-.2ex}{2.3ex plus .2ex}{\normalsize\bf}}
\makeatother

%
%

\begin{document}

\title{\textbf{\large Small ball probabilities for a  class of\\ time-changed self-similar processes}}
\author{\textsc{Kei Kobayashi}\thanks{Department of Mathematics, University of Tennessee, 1403 Circle Drive, Knoxville, TN 37996, USA. \textit{Email:} kkobayas@utk.edu}\\
        \vspace{1mm} \\ \textit{\normalsize University of Tennessee}}
\maketitle                                                       
\vspace{-2mm}
\renewcommand{\thefootnote}{\fnsymbol{footnote}}

\begin{abstract}
This paper establishes small ball probabilities for a  class of  time-changed processes $X\circ E$, where $X$ is a self-similar process and $E$ is an independent continuous process, each with a certain small ball probability.  In particular, examples of the outer process $X$ and the time change $E$ include an iterated fractional Brownian motion and the inverse of a general subordinator with infinite L\'evy measure, respectively. The small ball probabilities of such time-changed processes show power law decay, and the rate of decay does not depend on the small deviation order of the outer process $X$, but on the self-similarity index of $X$.      \footnote[0]{\textit{AMS 2010 subject classifications:} Primary 60F99, 60G18; secondary 60G51. 
     \textit{Keywords:} small ball probability, small deviation, time-changed self-similar process, iterated process, inverse subordinator.}
\end{abstract}

\large

\section{Introduction}

Let $W$ be a one-dimensional standard Brownian motion and let $E_\beta$ be the inverse of a stable subordinator $D_\beta$ of index $\beta\in(0,1)$, independent of $W$.
Nane \cite{Nane_LIL2009} established that the small ball probability of the time-changed Brownian motion $W\circ E_\beta$ is given by
\begin{align}\label{Nane}
	 \mathbb{P}\Bigl(\sup_{0\le t\le 1} |W(E_\beta(t))|\le \epsilon \Bigr)
	\sim \dfrac{32 \Gamma(\beta)\sin(\beta\pi)}{\pi^4} \sum_{k=1}^\infty \dfrac{(-1)^{k-1}}{(2k-1)^3}\epsilon^2 \ \ \textrm{as} \ \ \epsilon\downarrow 0, 
\end{align}
where $\Gamma(\cdot)$ is Euler's Gamma function and the notation $f(x)\sim g(x)$ for two positive functions $f$ and $g$ means that $\lim f(x)/g(x)=1$. 
%
%
%
The result is interesting since the small ball probability of $W\circ E_\beta$ shows power law decay unlike the exponential decay observed for the original Brownian motion $W$:
\begin{align}\label{Nane2}
	-\log \mathbb{P}\Bigl(\sup_{0\le t\le 1} |W(t)|\le \epsilon \Bigr)
	\sim \dfrac{\pi^2}{8} \epsilon^{-2}  \ \ \textrm{as} \ \ \epsilon\downarrow 0.
\end{align}
Moreover, the rate of decay in \eqref{Nane} does not depend on the stability index $\beta$ of the underlying stable subordinator $D_\beta$; the dependence on $\beta$ only appears as a small deviation constant independent of $\epsilon$.


The proof of \eqref{Nane} provided in \cite{Nane_LIL2009} essentially relies on 
the following expression for the Laplace transform of the random variable $E_\beta(1)$ and its asymptotic behavior along the negative real axis (see e.g.\ Proposition 1(a) of \cite{Bingham} and Theorem 1.4 of \cite{Podlubny}):
\begin{align}\label{Mittag-Leffler_asymp}
	\mathbb{E}[e^{-aE_\beta(1)}]=\mathbf{E}_\beta(-a)\sim \dfrac{1}{a\Gamma(1-\beta)} \ \ \textrm{as} \ \ a\to\infty.
\end{align}
Here, $\mathbf{E}_\beta(z)=\sum_{n=0}^\infty z^n/\Gamma(n\beta+1)$ is the Mittag--Leffler function with parameter $\beta$.
Nane \cite{Nane_LIL2009} also extended the result to a time-changed process $X\circ E_\beta$, where the outer process $X$ is a self-similar process possessing a certain small ball probability, which particularly includes the case of a fractional Brownian motion. 
However, the exact small deviation constant cannot be specified unlike the situations considered in \cite{AurzadaLifshits}; see Remark \ref{Remark_main4} for details of this point.

The main motivation to analyze such time-changed processes comes from their non-standard diffusion structures. In particular, the time-changed Brownian motion $W\circ E_\beta$  is non-Gaussian and non-Markovian, and is widely used to model subdiffusions, where particles spread more slowly than the classical Brownian particles do. Namely, the particles represented by the time-changed Brownian motion are trapped and immobile during the constant periods of the time change $E_\beta$. One interesting aspect of the time-changed Brownian motion is that its transition probabilities satisfy the following time-fractional generalization of the Fokker--Planck or forward Kolmogorov equation:
\[
	\partial^\beta_t p(t,x)= \frac 12 \partial_x^2 p(t,x), \ \ t>0, \ x\in\mathbb{R}.
\]
Here, $\partial^\beta_t$ denotes the Caputo fractional derivative operator in time of order $\beta$ (see e.g.\ \cite{Podlubny}). 
The correspondence between the time-changed Brownian motion and the fractional Kolmogorov equation  has been extended to those for different classes of time-changed processes and stochastic differential equations they drive; see e.g.\ \cite{HKRU,HKU-2,HKU-1,Kobayashi,Magdziarz_spa,MS_1,MS_2}. The fractional Kolmogorov equations have found many applications in a wide range of scientific areas, including physics, \cite{MetzlerKlafter00,Zaslavsky}, finance \cite{GMSR,Magdziarz_BS}, hydrology \cite{BWM}, and biology \cite{Saxton}. 

In this paper, we establish small ball probabilities for a class of time-changed processes $X\circ E$, where $X$ is a self-similar process and $E$ is a continuous process independent of $X$, each with a certain small ball probability (Theorems \ref{Theorem_main2} and \ref{Theorem_main4}). This largely extends the results in \cite{Nane_LIL2009} in terms of both the outer process $X$ and the time change $E$. Examples of $X$ and $E$ that can be handled within our framework include an iterated fractional Brownian motion and the inverse of a general subordinator with infinite L\'evy measure, 
respectively. Our strategy is to employ a version of the Tauberian theorem (Lemma \ref{Lemma_Tauberian_2}) along with a general fact concerning a subordinator (Proposition \ref{Proposition_E}), which is a different approach from what was taken in \cite{Nane_LIL2009} to derive \eqref{Nane}. In particular, even when $E$ is the inverse of a stable subordinator, our method does not rely on the asymptotic expression for the Mittag--Leffler function given in \eqref{Mittag-Leffler_asymp}.

The results to be established in this paper show that the small ball probability of a certain time-changed process $X\circ E$ has power law decay whose rate depends on the self-similarity index of the outer process $X$, but not on the small deviation order of $X$. In a particular case of a time-changed Brownian motion $W\circ E$ with the time change $E$ being the inverse of a general subordinator with infinite L\'evy measure, the dependence on $E$ is reflected on the associated small deviation constant.  We will specify that constant when the underlying subordinator is a Gamma subordinator or a tempered stable subordinator; these specific time changes have been recently investigated to analyze anomalous diffusions observed in various natural phenomena (see e.g.\ \cite{Janczura}). This will allow us to examine how the small ball probabilities for the important subclasses of time-changed processes vary according to the choice of the parameters defining the underlying subordinators.  In particular, our result with the time change being the inverse of a tempered stable subordinator recovers \eqref{Nane} as an immediate corollary; see Remark \ref{Remark_tempered} for details.

\section{Small ball probabilities for time-changed Brownian motions}

Let $E$ be a stochastic process in $\mathbb{R}^1$ with continuous, nondecreasing paths starting at 0. One way to construct such a process is through a subordinator.  Namely, let $D$ be a subordinator with Laplace exponent $\psi$ and infinite L\'evy measure $\nu$; i.e.\ $D$ is a one-dimensional nondecreasing L\'evy process with c\`adl\`ag paths starting at 0 with Laplace transform 
\begin{align}\label{def_LaplaceExponent}
	\mathbb{E}[e^{-sD(t)}]=e^{-t\psi(s)}, \ \ \textrm{where} \ \ 
	\psi(s)=b s +\int_0^\infty (1-e^{-sx})\hspace{1pt}\nu(\textrm{d}x), \ \ s> 0,
\end{align}
with $b\ge 0$ and
$\int_0^\infty (x\wedge 1)\hspace{1pt} \nu(\textrm{d}x)<\infty$. 
The assumption that the L\'evy measure is infinite (i.e.\ $\nu(0,\infty)=\infty$) implies that $\psi$ is an increasing function with $\lim_{s\to\infty}\psi(s)=\infty$ and $D$ has strictly increasing paths with infinitely many jumps (see e.g.\ Theorem 21.3 of \cite{Sato}). Let $E$ be the \textit{inverse} or \textit{first hitting time process} of $D$; i.e.\ 
\[
	E(t):=\inf\{u>0; D(u)>t\}, \ \ t\ge 0.
\]
Since $D$ has strictly increasing paths, the process $E$, called an \textit{inverse subordinator}, has continuous, nondecreasing paths starting at 0 (see e.g.\ Lemma 2.7 of \cite{Kobayashi}). It is known that $E$ generally does not have independent or stationary increments (see Section 3 of \cite{MS_1}), which implies that even if $X$ is a Gaussian or L\'evy process independent of $E$, the time-changed process $X\circ E$ no longer has the same structure. Hence, existing results on small ball probabilities of Gaussian or L\'evy processes cannot be directly applied to find the small ball probability of $X\circ E$.

A stochastic process $X$ in $\mathbb{R}^1$ is called a \textit{self-similar process of index $H>0$} if for every $a>0$, $(X(at))_{t\ge 0}=^d (a^H X(t))_{t\ge 0}$. Important examples of self-similar processes include fractional Brownian motions, iterated fractional Brownian motions, and stable L\'evy processes. Brief definitions of these processes will be provided in examples in Section \ref{Section_Extensions}.  


The following theorem largely extends Theorem 2.1 of \cite{Nane_LIL2009} to the case when the time change is given by the inverse of a non-stable subordinator.

\begin{theorem}\label{Theorem_main2}
 Let $E$ be the inverse of a subordinator $D$ with infinite L\'evy measure $\nu$, independent of a one-dimensional standard Brownian motion $W$.  Then for all $T>0$ at which $\nu$ has no mass (i.e.\ $\nu(\{T\})=0)$,  
\begin{align}\label{small_ball_2}
	 \mathbb{P}\Bigl(\sup_{0\le t\le T} |W(E(t))|\le \epsilon \Bigr)
	\sim\dfrac{32}{\pi^3} \nu(T,\infty) \sum_{k=1}^\infty \dfrac{(-1)^{k-1}}{(2k-1)^3}\epsilon^2
	 \ \ \textrm{as} \ \ \epsilon\downarrow 0. 
\end{align}
This is interpreted as $\mathbb{P}(\sup_{0\le t\le T} |W(E(t))|\le \epsilon)=o(\epsilon^2)$ if $\nu(T,\infty)=0$.
\end{theorem}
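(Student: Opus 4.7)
The plan is to reduce the problem to the scaling behaviour of $E(T)$ near $0$ and then invoke Tauberian machinery to move between probabilities and Laplace transforms. Since $E$ is continuous, nondecreasing, and starts at $0$, a straightforward time-change identity gives
\[
\sup_{0\le t\le T} |W(E(t))| \;=\; \sup_{0\le s\le E(T)} |W(s)|.
\]
Conditioning on $E(T)$ (which is independent of $W$) and using the $1/2$-self-similarity of Brownian motion, this quantity has the same law as $\sqrt{E(T)}\,M$, where $M:=\sup_{0\le s\le 1}|W(s)|$ is independent of $E(T)$. Thus the theorem reduces to showing
\[
\mathbb{P}\bigl(M^{2}E(T)\le \epsilon^{2}\bigr) \;\sim\; \nu(T,\infty)\,\mathbb{E}[1/M^{2}]\,\epsilon^{2} \quad \text{as } \epsilon\downarrow 0.
\]

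The Tauberian strategy proceeds as follows. Proposition \ref{Proposition_E} gives, at any $T$ with $\nu(\{T\})=0$, the asymptotic $\mathbb{P}(E(T)\le u)\sim \nu(T,\infty)\,u$ as $u\downarrow 0$; applying Lemma \ref{Lemma_Tauberian_2} translates this into
\[
\phi(s):=\mathbb{E}[e^{-sE(T)}] \;\sim\; \frac{\nu(T,\infty)}{s} \quad \text{as }s\to\infty.
\]
Using independence, $\mathbb{E}[e^{-\lambda M^{2}E(T)}]=\mathbb{E}[\phi(\lambda M^{2})]$, so pointwise $\lambda\phi(\lambda M^{2})\to \nu(T,\infty)/M^{2}$ as $\lambda\to\infty$. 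The trivial bound $\phi\le 1$ combined with $s\phi(s)\le C$ for $s\ge s_{0}$ furnishes a $\lambda$-uniform estimate $\lambda\phi(\lambda M^{2})\le C'/M^{2}$, and dominated convergence (justified by the finiteness of $\mathbb{E}[1/M^{2}]$ computed below) yields
\[
\lambda\,\mathbb{E}[e^{-\lambda M^{2}E(T)}] \;\longrightarrow\; \nu(T,\infty)\,\mathbb{E}[1/M^{2}].
\]
A second application of Lemma \ref{Lemma_Tauberian_2}, now with exponent $\alpha=1$, converts this Laplace-transform asymptotic back into the probability asymptotic $\mathbb{P}(M^{2}E(T)\le r)\sim \nu(T,\infty)\,\mathbb{E}[1/M^{2}]\,r$ as $r\downarrow 0$, which is what is needed upon setting $r=\epsilon^{2}$.

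To identify the constant, substitute the classical Brownian small-ball series
\[
\mathbb{P}(M\le \epsilon) \;=\; \frac{4}{\pi}\sum_{k=0}^{\infty}\frac{(-1)^{k}}{2k+1}\exp\!\Bigl(-\frac{(2k+1)^{2}\pi^{2}}{8\epsilon^{2}}\Bigr)
\]
into the layer-cake identity $\mathbb{E}[1/M^{2}]=\int_{0}^{\infty}2\epsilon^{-3}\mathbb{P}(M\le\epsilon)\,d\epsilon$. The super-exponential decay of each term as $\epsilon\downarrow 0$ together with absolute convergence of $\sum 1/(2k+1)^{3}$ justifies termwise integration via Fubini; the substitution $v=(2k+1)^{2}\pi^{2}/(8\epsilon^{2})$ evaluates each integral to $8/((2k+1)^{2}\pi^{2})$, giving $\mathbb{E}[1/M^{2}]=(32/\pi^{3})\sum_{k=1}^{\infty}(-1)^{k-1}/(2k-1)^{3}$, which matches the constant in \eqref{small_ball_2}. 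The degenerate case $\nu(T,\infty)=0$ is handled by the same scheme: one has $s\phi(s)=o(1)$, hence $\lambda\,\mathbb{E}[e^{-\lambda M^{2}E(T)}]\to 0$, and Tauberian gives $\mathbb{P}(M^{2}E(T)\le r)=o(r)$, i.e.\ the $o(\epsilon^{2})$ assertion.

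The main technical hurdle will be the dominated-convergence step: the Tauberian asymptotic $s\phi(s)\to \nu(T,\infty)$ is valid only as $s\to\infty$, whereas the random argument $\lambda M^{2}$ probes both small and large regimes depending on $M$; one has to splice the asymptotic bound on $\{s\ge s_{0}\}$ with the trivial bound $\phi\le 1$ on $\{s<s_{0}\}$ to obtain a single majorant of the form $C'/M^{2}$. Beyond this, the proof is essentially mechanical once Proposition \ref{Proposition_E} is in hand — the proposition itself, asserting that short-time exits of $D$ above level $T$ occur predominantly through a single jump of size exceeding $T$, is where the hypothesis $\nu(\{T\})=0$ is genuinely used.
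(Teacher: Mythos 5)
Your proof is correct, and it genuinely differs from the paper's argument. The paper substitutes Chung's explicit series into $\mathbb{E}[\,\cdot\,]$ at the outset, interchanges $\mathbb{E}$ and $\sum_k$ via Fubini (justified through Lemma~\ref{Lemma_laplace}, which gives $\mathcal{L}_t[\mathbb{E}[e^{-aE(t)}]](s)$ explicitly to show the absolute version $g_\epsilon(T)$ is finite), arrives at $\epsilon^{-2}f_\epsilon(T)=\frac{32}{\pi^3}\sum_k \frac{(-1)^{k-1}}{(2k-1)^3}\varphi_T\bigl(\tfrac{(2k-1)^2\pi^2}{8\epsilon^2}\bigr)$, and then applies Lemma~\ref{Lemma_Tauberian_2} \emph{once} (Abelian direction) plus dominated convergence over the alternating series. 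You instead package the Brownian factor as a single random variable $M=\sup_{0\le s\le 1}|W(s)|$ via the distributional identity $\sup_{0\le t\le T}|W(E(t))|=^d \sqrt{E(T)}\,M$, apply Lemma~\ref{Lemma_Tauberian_2} twice (once Abelian to get $\phi(s)\sim\nu(T,\infty)/s$, once Tauberian to descend from $\lambda\,\mathbb{E}[e^{-\lambda M^2 E(T)}]\to\nu(T,\infty)\,\mathbb{E}[1/M^2]$ back to probabilities), and identify the constant by a separate layer-cake/Fubini computation of $\mathbb{E}[1/M^2]$. Your route bypasses Lemma~\ref{Lemma_laplace} entirely — the only delicate Fubini/DCT step you face is splicing the majorant $C'/M^2$ from the $s\ge s_0$ and $s<s_0$ regimes, which you flag and which does go through since $\mathbb{E}[1/M^2]<\infty$ by Chung's estimate. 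The trade-off: the paper uses only one Tauberian application and a termwise DCT tied tightly to the explicit Chung series, whereas your argument is more modular and makes the constant transparent as $\nu(T,\infty)\,\mathbb{E}[M^{-2}]$, a form that extends more naturally toward the general self-similar outer processes of Section~\ref{Section_Extensions}. One small caveat for the degenerate case $\nu(T,\infty)=0$: Lemma~\ref{Lemma_Tauberian_2} as stated requires $A>0$, so the passage $s\phi(s)=o(1) \Rightarrow \lambda\mathbb{E}[e^{-\lambda M^2E(T)}]\to 0 \Rightarrow \mathbb{P}(\cdot)=o(\epsilon^2)$ needs the $o$-version of the Tauberian theorem rather than the lemma itself — but the paper's own treatment of that case has the same issue, so this is a shared (and easily repaired) technicality rather than a gap in your argument.
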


\begin{remark}
\begin{em}
1) If $\nu(T,\infty)>0$, then the small ball probability of the time-changed Brownian motion $W\circ E$ has a power law decay. 
Moreover, the rate of decay of the small ball probability does not depend on the choice of the inverse subordinator $E$; the dependence on $E$ is reflected only on the constant $\nu(T,\infty)$. 

2) In the degenerate case when $E(t)= t$, clearly $\nu\equiv 0$ and hence the small ball probability becomes $o(\epsilon^2)$. This is because the small ball probability for the Brownian motion $W$ (without a time change) has an exponential decay as in \eqref{Nane2}.

3) If $E=E_\beta$ is the inverse of a $\beta$-stable subordinator $D_\beta$, then 
\eqref{small_ball_2} immediately recovers \eqref{Nane}. Indeed, 
using the explicit form of the L\'evy measure of $D_\beta$ (see e.g.\ Example 1.3.18 of \cite{Applebaum}), we observe that 
\begin{align}\label{nu_stable}
  	\nu(T,\infty)= \int_T^\infty \dfrac{\beta}{\Gamma(1-\beta)} x^{-1-\beta}\hspace{1pt} \textrm{d}x 
	=\dfrac{T^{-\beta}}{\Gamma(1-\beta)}.
  \end{align}
When $T=1$, the last expression coincides with $\Gamma(\beta)\sin(\beta\pi)/\pi$ due to Euler's reflection formula; consequently, the expression \eqref{small_ball_2} takes the specific form given in \eqref{Nane}.  
\end{em}
\end{remark}

The proof of Theorem \ref{Theorem_main2} requires some auxiliary facts to be established first.

\begin{lemma}[A version of the Tauberian theorem]\label{Lemma_Tauberian_2}
Let $V$ be a nonnegative random variable and let $A$ and $\theta$ be positive constants. Then 
\[
	\mathbb{E}[e^{-aV}]\sim A\hspace{1pt} a^{-\theta} \ \ \textrm{as} \ \ a\to\infty 
\]
if and only if 
\[
	\mathbb{P}(V\le \epsilon) \sim \frac{A}{\Gamma(\theta+1)}\hspace{1pt}\epsilon^\theta \ \ \textrm{as} \ \ \epsilon\downarrow 0.
\]
\end{lemma}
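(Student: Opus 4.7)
The plan is to recognize the statement as Karamata's Tauberian theorem applied to the distribution function $F(x):=\mathbb{P}(V\le x)$, with the slowly varying function taken to be the constant $A$. I would split the proof into the two implications and treat each separately. For the Abelian direction ($\Leftarrow$), I would begin by integrating by parts and rescaling:
\[
\mathbb{E}[e^{-aV}]=\int_0^\infty e^{-ax}\,dF(x)=a\int_0^\infty e^{-ax}F(x)\,dx=\int_0^\infty e^{-u}F(u/a)\,du.
\]
Multiplying by $a^\theta$ and inserting the hypothesized $F(u/a)\sim\frac{A}{\Gamma(\theta+1)}(u/a)^\theta$, the integrand converges pointwise to $\frac{A}{\Gamma(\theta+1)}e^{-u}u^\theta$. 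Dominated convergence applies thanks to a bound of the form $F(y)\le Cy^\theta$ for $y$ near $0$ combined with the trivial bound $F(y)\le 1$ elsewhere, which together yield an integrable majorant. Passing to the limit and using $\int_0^\infty e^{-u}u^\theta\,du=\Gamma(\theta+1)$ then delivers $A$.

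For the Tauberian direction ($\Rightarrow$), I would invoke Karamata's Tauberian theorem (e.g.\ Feller, Vol.~II, Theorem XIII.5.2, or Bingham--Goldie--Teugels, Theorem 1.7.1) with slowly varying function $L\equiv A$; the required monotonicity hypothesis is automatic since $F$ is a distribution function. For a self-contained argument, I would reproduce the classical Karamata proof: replacing $a$ by $a(1+s)$ in the hypothesis shows that the rescaled measures $a^\theta e^{-ax}\,dF(x)$ integrate each exponential $e^{-sx}$ to a definite limit, and the Stone--Weierstrass theorem provides uniform approximation of $\mathbf{1}_{[0,1]}$ by polynomials in $e^{-x}$ on $[0,\infty)$. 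This transfers the Laplace information into $a^\theta\mathbb{P}(V\le 1/a)\to A/\Gamma(\theta+1)$, and the substitution $\epsilon=1/a$ yields the stated asymptotic.

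The main obstacle will be the Tauberian implication, since the Abelian direction is only an application of dominated convergence. The key observation that makes the Tauberian step tractable is that $F$ is automatically nondecreasing, which places the problem squarely within the classical Karamata framework; no additional regularity assumption on $V$ is needed beyond what the lemma already states.
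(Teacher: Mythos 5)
Your proposal is correct and takes essentially the same approach as the paper: the paper simply cites the classical Abelian and Tauberian theorems for Laplace transforms (Corollary~1a and Theorem~4.3 of Chapter~V of Widder), while you cite the equivalent Karamata theorem from Feller/BGT and additionally spell out the Abelian direction via dominated convergence. Both are the standard recognition that with $F(x)=\mathbb{P}(V\le x)$ nondecreasing and $L\equiv A$ slowly varying, the statement is a direct special case of Karamata's Tauberian theorem for Laplace--Stieltjes transforms near the origin.
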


\begin{proof}
This follows from Corollary 1a and Theorem 4.3 of Chapter V of \cite{Widder}.
\end{proof}

\begin{proposition}\label{Proposition_E}
Let $E$ be the inverse of a subordinator $D$ with infinite L\'evy measure $\nu$. 
Then 
for all $T>0$ at which $\nu$ has no mass,
\begin{align}\label{small_E(1)_sub}
	\mathbb{P}(E(T)\le \epsilon) 
	\sim \nu(T,\infty)\hspace{1pt} \epsilon \ \ \textrm{as} \ \ \epsilon\downarrow 0.
\end{align}
This is interpreted as $\mathbb{P}(E(T)\le \epsilon) =o(\epsilon)$ if $\nu(T,\infty)=0$.
\end{proposition}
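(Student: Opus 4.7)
The first move is to translate the statement into a small-time statement about the subordinator $D$ itself. Since $D$ has strictly increasing, right-continuous paths (because $\nu$ is infinite), the standard duality
\[
\{E(T) \le \epsilon\} = \{D(\epsilon) \ge T\}
\]
holds for every $\epsilon, T > 0$, reducing the claim to $\mathbb{P}(D(\epsilon) \ge T) \sim \nu(T,\infty)\,\epsilon$ as $\epsilon \downarrow 0$. This is in fact a classical small-time L\'evy-measure identity for general L\'evy processes: if $X$ has L\'evy measure $\nu_X$, then $t^{-1}\mathbb{P}(X(t)\in A)\to \nu_X(A)$ as $t\downarrow 0$ for every Borel set $A$ bounded away from $0$ with $\nu_X(\partial A)=0$. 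Applied to $D$ with $A=[T,\infty)$, whose boundary $\{T\}$ has zero $\nu$-measure by hypothesis, this would finish the proof at once. I sketch below a self-contained argument via the L\'evy--It\^o decomposition.

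Write $D = D^B + D^S$, where $D^B$ is the compound Poisson process carrying the jumps of size strictly greater than $T$, with intensity $\lambda := \nu(T,\infty)$, and $D^S$ is an independent subordinator containing the drift and all jumps of size at most $T$. Since $D^S\ge 0$ and any nonzero value of $D^B$ already exceeds $T$, we have $\{D^B(\epsilon) > 0\} \subseteq \{D(\epsilon) \ge T\}$, and so, by independence,
\[
\mathbb{P}(D(\epsilon) \ge T) = (1 - e^{-\lambda\epsilon}) + e^{-\lambda\epsilon}\,\mathbb{P}(D^S(\epsilon) \ge T).
\]
The first term equals $\lambda\epsilon + O(\epsilon^2)$, so the whole conclusion follows once one shows $\mathbb{P}(D^S(\epsilon) \ge T) = o(\epsilon)$ as $\epsilon\downarrow 0$. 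This is the main obstacle: Markov's inequality on $\mathbb{E}[D^S(\epsilon)]=O(\epsilon)$ only gives $O(\epsilon)$, not $o(\epsilon)$, since $D^S$ may still have infinitely many small jumps.

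To upgrade to $o(\epsilon)$, I would introduce two small auxiliary scales $\eta,\xi\in(0,T)$ with $\eta+\xi<T$, both chosen at continuity points of $\nu$, and further split $D^S = J + D^{\mathrm{vs}}$, where $J$ is the independent compound Poisson of jumps in $(\eta,T]$ (intensity $\nu(\eta,T]<\infty$) and $D^{\mathrm{vs}}$ carries the drift and jumps of size at most $\eta$. On $\{D^S(\epsilon)\ge T\}$ at least one of $\{J(\epsilon)\ge T-\xi\}$ or $\{D^{\mathrm{vs}}(\epsilon)\ge \xi\}$ must occur, for otherwise $D^S(\epsilon) = J(\epsilon)+D^{\mathrm{vs}}(\epsilon) < (T-\xi)+\xi = T$. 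A single-jump versus multi-jump count gives $\mathbb{P}(J(\epsilon)\ge T-\xi) = \epsilon\,\nu(T-\xi,T]+O(\epsilon^2)$; and once $\epsilon<\xi/(2b)$ one has $b\epsilon<\xi/2$, so Markov applied to the pure-jump part of $D^{\mathrm{vs}}$ yields $\mathbb{P}(D^{\mathrm{vs}}(\epsilon)\ge\xi)\le 2\epsilon\int_0^\eta x\,\nu(\mathrm{d}x)/\xi$. Dividing by $\epsilon$, letting $\epsilon\downarrow 0$, then $\eta\downarrow 0$ (so that $\int_0^\eta x\,\nu(\mathrm{d}x)\to 0$), and finally $\xi\downarrow 0$ (so that $\nu(T-\xi,T]\downarrow \nu(\{T\})=0$ by hypothesis) produces the desired $o(\epsilon)$ bound. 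The degenerate case $\nu(T,\infty)=0$ is automatic: then $D^B\equiv 0$, and the same argument yields $\mathbb{P}(D(\epsilon)\ge T)=\mathbb{P}(D^S(\epsilon)\ge T)=o(\epsilon)$.
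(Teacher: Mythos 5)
Your proof is correct, and it takes a genuinely different route from the paper's. Both begin with the duality $\mathbb{P}(E(T)\le\epsilon)=\mathbb{P}(D(\epsilon)\ge T)$, but from there they diverge. The paper sets $\bar\nu_n(x):=\mathbb{P}(D(\epsilon_n)\ge x)/\epsilon_n$, invokes the vague convergence $\bar\nu_n(x)\,\mathrm{d}x \to b\,\delta_0(\mathrm{d}x)+\nu(x,\infty)\,\mathrm{d}x$ established in the proof of Theorem 1.2(i) of Bertoin's subordinator notes, and then upgrades this integral convergence to pointwise convergence at $x=T$ by a compactness/contradiction argument that exploits the monotonicity in $x$ of each $\bar\nu_n$ together with continuity of $x\mapsto\nu(x,\infty)$ at $T$ (which is exactly where $\nu(\{T\})=0$ enters). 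Your argument is instead a self-contained computation: you split $D=D^B+D^S$ at level $T$, observe the exact identity $\mathbb{P}(D(\epsilon)\ge T)=(1-e^{-\lambda\epsilon})+e^{-\lambda\epsilon}\mathbb{P}(D^S(\epsilon)\ge T)$ with $\lambda=\nu(T,\infty)$, and reduce to showing $\mathbb{P}(D^S(\epsilon)\ge T)=o(\epsilon)$. The two-parameter truncation at scales $\eta$ and $\xi$, the single-jump estimate $\mathbb{P}(J(\epsilon)\ge T-\xi)=\epsilon\,\nu(T-\xi,T]+O(\epsilon^2)$, the Markov bound $\mathbb{P}(D^{\mathrm{vs}}(\epsilon)\ge\xi)\le 2\epsilon\xi^{-1}\int_0^\eta x\,\nu(\mathrm{d}x)$, and the order of limits $\epsilon\downarrow0$, then $\eta\downarrow0$, then $\xi\downarrow0$ are all in order; the hypothesis $\nu(\{T\})=0$ is used in the final step $\nu(T-\xi,T]\downarrow0$, playing the same structural role as the continuity of $\bar\nu$ at $T$ in the paper. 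Your version has the advantage of not relying on an external vague-convergence theorem and of making the mechanism behind the atom hypothesis completely explicit; the paper's version is shorter because it outsources the heart of the small-time asymptotics to Bertoin, at the cost of the somewhat delicate subsequence argument needed to pass from measures to a single tail value.
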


\begin{proof}
See Appendix. 
\end{proof}

\begin{lemma}\label{Lemma_laplace}
Let $E$ be the inverse of a subordinator $D$ with Laplace exponent $\psi$. 
Then for any fixed $a>0$, the Laplace transform of the function $t\mapsto \mathbb{E}[e^{-aE(t)}]$ exists and is given by 
\begin{align}\label{Laplace_E(t)}
	\mathcal{L}_t\bigl[\mathbb{E}[e^{-aE(t)}]\bigr](s)=\frac{\psi(s)}{s}(\psi(s)+a)^{-1}, \ \ s>0. 
\end{align}
\end{lemma}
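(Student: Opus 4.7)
The plan is to reduce the computation of $\mathcal{L}_t[\mathbb{E}[e^{-aE(t)}]](s)$ to a double integral on which Fubini's theorem applies, and then to exploit the fundamental duality between $E$ and $D$, namely $\{E(t)\le u\}=\{D(u)>t\}$ (valid because $D$ is strictly increasing when $\nu(0,\infty)=\infty$).

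First I would write $e^{-aE(t)}=a\int_0^\infty e^{-au}\mathbf{1}_{\{E(t)\le u\}}\,\mathrm{d}u$ via the layer-cake/tail formula. Multiplying by $e^{-st}$, taking expectation, and applying Fubini (legitimate since every integrand is nonnegative), the object of interest becomes
\[
 \mathcal{L}_t\bigl[\mathbb{E}[e^{-aE(t)}]\bigr](s)
 =a\int_0^\infty e^{-au}\,\mathbb{E}\!\left[\int_0^\infty e^{-st}\mathbf{1}_{\{D(u)>t\}}\,\mathrm{d}t\right]\mathrm{d}u,
\]
where I have used the duality above inside the indicator. The inner $t$-integral collapses to $\int_0^{D(u)}e^{-st}\,\mathrm{d}t=(1-e^{-sD(u)})/s$.

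Taking expectation and invoking the defining identity $\mathbb{E}[e^{-sD(u)}]=e^{-u\psi(s)}$, the problem is reduced to the elementary integral
\[
 \mathcal{L}_t\bigl[\mathbb{E}[e^{-aE(t)}]\bigr](s)
 =\frac{a}{s}\int_0^\infty e^{-au}\bigl(1-e^{-u\psi(s)}\bigr)\,\mathrm{d}u
 =\frac{a}{s}\left(\frac{1}{a}-\frac{1}{a+\psi(s)}\right),
\]
which simplifies algebraically to $\psi(s)/(s(\psi(s)+a))$, as required.

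There is essentially no obstacle beyond bookkeeping: Fubini is free because nonnegativity is in force throughout, and the duality $\{E(t)\le u\}=\{D(u)>t\}$ is a standard consequence of the right-continuity and strict monotonicity of $D$ guaranteed by $\nu(0,\infty)=\infty$. The only subtlety worth a sentence of justification is that the formula holds for every $s>0$ because $\psi(s)>0$ (so that the final integrals are finite and the Laplace transform exists as claimed).
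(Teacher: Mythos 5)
Your proof is correct. It reaches the same formula by a route that overlaps with the paper's but differs at one key step, and the difference is worth noting. The paper first computes $\mathcal{L}_t[\mathbb{P}(E(t)\le x)](s)=(1-e^{-x\psi(s)})/s$ via the duality $\mathbb{P}(E(t)\le x)=\mathbb{P}(D(x)\ge t)$, then \emph{differentiates in $x$} to pass to the (transform of the) law $\mathbb{P}(E(t)\in\mathrm{d}x)$, and finally integrates that against $e^{-ax}$. You instead insert the layer-cake identity $e^{-aE(t)}=a\int_0^\infty e^{-au}\mathbf{1}_{\{E(t)\le u\}}\,\mathrm{d}u$ at the very start and never leave the realm of the distribution function: everything is handled by nonnegative Fubini, the same duality $\{E(t)\le u\}=\{D(u)>t\}$, the Laplace exponent $\mathbb{E}[e^{-sD(u)}]=e^{-u\psi(s)}$, and an elementary exponential integral. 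The two are of course related (the layer-cake step is integration by parts in disguise), but your version sidesteps the paper's differentiation step, which is the one place in the paper's argument that requires a regularity justification (``the right-hand side being differentiable, so is the left''); everything in your version is an identity between nonnegative integrals. The paper's version, in exchange, produces an explicit formula for $\mathcal{L}_t[\mathbb{P}(E(t)\in\mathrm{d}x)](s)$ along the way, which is a reusable intermediate object. One small remark: you don't actually need the hypothesis $\nu(0,\infty)=\infty$ that you invoke for strict monotonicity of $D$; the one-sided duality $\{E(t)\le u\}\supseteq\{D(u)>t\}$ and its near-converse already suffice because the events $\{E(t)\le u\}$ and $\{D(u)\ge t\}$ differ only on a Lebesgue-null set of $(t,u)$, which is invisible under the double integral. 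In the present paper the infinite-L\'evy-measure assumption is in force anyway, so this costs nothing, but the lemma as stated is true more generally.
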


\begin{proof}
See Appendix. 
\end{proof}

\begin{remark}
\begin{em}
Lemma \ref{Lemma_laplace} implies that if $E=E_\beta$ is the inverse of a $\beta$-stable subordinator, then for a fixed $a>0$, 
\begin{align*}
	\mathcal{L}_t\bigl[\mathbb{E}[e^{-aE_\beta(t)}]\bigr](s)
	=\dfrac{s^{\beta-1}}{s^\beta+a}, \ \ s>0. 
\end{align*}
Since the right hand side coincides with the Laplace transform of the function $t\mapsto\mathbf{E}_\beta(-at^\beta)$ (see e.g.\ \cite{Podlubny}), we recover the well-known formula
$
	\mathbb{E}[e^{-aE_\beta(t)}]=\mathbf{E}_\beta(-at^\beta),
$
which is used to derive \eqref{Nane} in \cite{Nane_LIL2009}. In the proof of Theorem \ref{Theorem_main2}, we use \eqref{Laplace_E(t)} to guarantee the use of the Fubini Theorem. 
\end{em}
\end{remark}

\begin{proof}[Proof of Theorem \ref{Theorem_main2}]
By Theorem 1 of \cite{Chung} (also see the proof of Theorem 2.1 of \cite{Nane_LIL2009}), for \textit{all} $\epsilon>0$,  
\begin{align}\label{Chung}
	\mathbb{P}\Bigl(\sup_{0\le t\le 1} |W(t)|\le \epsilon \Bigr)
	=\dfrac{4}{\pi} \sum_{k=1}^\infty \dfrac{(-1)^{k-1}}{2k-1} \exp\bigg(-\dfrac{(2k-1)^2 \pi^2}{8\epsilon^2}\biggr).
\end{align}
For a fixed $\epsilon>0$, since $E$ is a continuous, nondecreasing process independent of $W$, which is self-similar with index $1/2$, a simple conditioning argument along with the use of \eqref{Chung} yields
\begin{align*}
	\mathbb{P}\Bigl(\sup_{0\le t\le T} |W(E(t))|\le \epsilon \Bigr)
	&=\mathbb{E}\biggl[\mathbb{P}\Bigl(\sup_{0\le s\le E(T)} |W(s)|\le \epsilon \Bigm| E \Bigr)\biggr]\\
	&=\mathbb{E}\biggl[\mathbb{P}\Bigl(\sup_{0\le s\le 1} |W(s)|\le \dfrac{\epsilon}{E(T)^{1/2}} \Bigm| E \Bigr)\biggr]\\
	&=f_\epsilon(T),
\end{align*}
where
\begin{align*}
	f_\epsilon (t):= \dfrac{4}{\pi}\mathbb{E}\biggl[ \sum_{k=1}^\infty \dfrac{(-1)^{k-1}}{2k-1} \exp\bigg(-\dfrac{(2k-1)^2 \pi^2 E(t)}{8\epsilon^2}\biggr) \biggr].
\end{align*}
We also introduce the auxiliary function  
\begin{align*}
	g_\epsilon (t)&:= \dfrac{4}{\pi}\mathbb{E}\biggl[ \sum_{k=1}^\infty \dfrac{1}{2k-1} \exp\bigg(-\dfrac{(2k-1)^2 \pi^2 E(t)}{8\epsilon^2}\biggr) \biggr].
\end{align*}
Then by the Fubini Theorem for nonnegative integrands (applied to the product measure $\mathbb{P}\times \textrm{counting measure} \times (e^{-st}\,\mathrm{dt})$) and the formula \eqref{Laplace_E(t)}, the Laplace transform of the function $t\mapsto g_\epsilon (t)$ is given by 
\begin{align*}
	\mathcal{L}_t[g_\epsilon](s)
	&=\dfrac{4}{\pi} \sum_{k=1}^\infty \dfrac{1}{2k-1} 
		\mathcal{L}_t\Biggl[\mathbb{E}\biggl[\exp\bigg(-\dfrac{(2k-1)^2 \pi^2 E(t)}{8\epsilon^2}\biggr) \biggr]\Biggr](s)\\
	&=\dfrac{4}{\pi} \sum_{k=1}^\infty \dfrac{1}{2k-1} 
		\dfrac{\psi(s)}{s}\biggl(\psi(s)+\dfrac{(2k-1)^2\pi^2}{8\epsilon^2}\biggr)^{-1}\\
	&\le \dfrac{4}{\pi} \dfrac{\psi(s)}{s}\sum_{k=1}^\infty \dfrac{8\epsilon^2}{(2k-1)^3 \pi^2}<\infty, \ \ s>0.
\end{align*}
This particularly implies that $g_\epsilon(t)<\infty$ for (Lebesgue) almost every $t>0$, but by the monotonicity of the function $g_\epsilon$, we must have $g_\epsilon(t)<\infty$ for \textit{all} $t>0$.
Therefore, due to the Fubini Theorem, the expectation and summation in the definition of $f_\epsilon(t)$ are interchangeable. Thus, 
\begin{align}\label{f_e}
	\dfrac{1}{\epsilon^2}f_\epsilon(T)
	&=\dfrac{1}{\epsilon^2}\dfrac{4}{\pi} \sum_{k=1}^\infty \dfrac{(-1)^{k-1}}{2k-1} \mathbb{E}\biggl[\exp\bigg(-\dfrac{(2k-1)^2 \pi^2 E(T)}{8\epsilon^2}\biggr) \biggr]\\
	&=\dfrac{32}{\pi^3} \sum_{k=1}^\infty \dfrac{(-1)^{k-1}}{(2k-1)^3}  \varphi_T\biggl(\dfrac{(2k-1)^2\pi^2}{8\epsilon^2}\biggr),\notag
\end{align}
where $\varphi_T(a):=a\mathbb{E}[e^{-a E(T)}]$ for $a>0$. By \eqref{small_E(1)_sub} along with Lemma \ref{Lemma_Tauberian_2}, it follows that $\varphi_T(a)\to \nu(T,\infty)$ as $a\to \infty$. 
Therefore, letting $\epsilon\downarrow 0$ in \eqref{f_e} and using the dominated convergence theorem (which is allowed since $\sum_{k=1}^\infty 1/(2k-1)^3<\infty$), we obtain \eqref{small_ball_2}.
\end{proof}

\begin{remark}\label{Remark_main2}
\begin{em}
In the proof of \eqref{Nane} provided in \cite{Nane_LIL2009}, where the time change is given by the inverse of a stable subordinator, the asymptotic facts about the Mittag--Leffler function  play a significant role (see equations (2.4) and (2.5) of that paper); they are employed to guarantee the use of the Fubini theorem and the dominated convergence theorem. 
For the inverse of a general \textit{non-stable} subordinator, however, the quantity $\mathbb{E}[e^{-aE(t)}]$ cannot be represented via a special function like the Mittag--Leffler function.  To overcome this difficulty, the proof provided above employs the explicit form of the Laplace transform of $t\mapsto \mathbb{E}[e^{-aE(t)}]$ (Lemma \ref{Lemma_laplace}) as well as a version of the Tauberian theorem (Lemma \ref{Lemma_Tauberian_2}) along with a general result concerning subordinators (Proposition \ref{Proposition_E}).
\end{em}
\end{remark}

Now we turn our attention to examples of time changes $E$ which are not considered in \cite{Nane_LIL2009} but can be handled by Theorem \ref{Theorem_main2}. This will entail small ball probabilities for some of the important time-changed Brownian motions representing anomalous diffusions observed in various fields of science.

Let us introduce the upper incomplete Gamma function $\Gamma(z,x)$ defined by 
\[
	\Gamma(z,x)=\int_x^\infty e^{-u}u^{z-1}\hspace{1pt} \textrm{d}u.
\]
Obviously $\Gamma(z,0)$ coincides with the Gamma function $\Gamma(z)$. Note that for $x>0$, the integral defining $\Gamma(z,x)$ is finite even when $z\le 0$. 
In particular, for $\beta\in(0,1)$ and $x>0$, a simple application of integration by parts yields
\begin{align}\label{Incomplete_Gamma}
	\Gamma(-\beta,x)=\dfrac{x^{-\beta}e^{-x}-\Gamma(1-\beta,x)}{\beta}.
\end{align}

\begin{example}[An inverse Gamma subordinator as a time change]\label{Example_Gamma}
\begin{em}
 Let $E$ be the inverse of a Gamma subordinator $D$ with parameters $c,b>0$; i.e., the Laplace exponent of $D$ in \eqref{def_LaplaceExponent} is given by 
 $
 	\psi(s)=c \log(1+s/b).
 $
  Then for all $T>0$, using the explicit form of the L\'evy measure (see e.g.\ Example 1.3.22 of \cite{Applebaum}), we obtain
  \[
  	\nu(T,\infty)= \int_T^\infty c x^{-1}e^{-bx}\hspace{1pt} \textrm{d}x =c\int_{bT}^\infty t^{-1} e^{-t}\hspace{1pt} \textrm{d}t
	=c \hspace{1pt}\Gamma(0,bT).
  \]
  Hence, \eqref{small_ball_2} with $\nu(T,\infty)$ replaced by $c \hspace{1pt}\Gamma(0,bT)$ yields the small ball probability of the time-changed Brownian motion.  
\end{em}
\end{example}

\begin{example}[An inverse tempered stable subordinator as a time change]\label{Example_tempered}
\begin{em}
Let $D$ be a tempered stable subordinator with stability index $\beta\in(0,1)$ and tempering function $q(x)$, which implies that the L\'evy measure of $D$ takes the form 
\begin{align}\label{q(x)}
	\nu(\textrm{d}x)=x^{-\beta-1}q(x)\hspace{1pt} \textrm{d}x \ \ \textrm{with} \ \ q(x)=\int_0^\infty e^{-\lambda x}\hspace{1pt} \mu(\textrm{d}\lambda),
\end{align}
where $\mu$ is a finite measure on $(0,\infty)$; see \cite{Rosinski_tempered} for details.  
By the Fubini theorem, 
\begin{align}\label{nu_tempered}
	\nu(T,\infty)
	=\int_0^\infty \int_T^\infty x^{-\beta-1} e^{-\lambda x}\hspace{1pt} \textrm{d}x\hspace{1pt} \mu(\textrm{d}\lambda)
	=\int_0^\infty \lambda^\beta \Gamma(-\beta,\lambda T)\hspace{1pt} \mu(\textrm{d}\lambda).
\end{align}
Note that $\Gamma(-\beta,\lambda)$ has an alternative expression given by \eqref{Incomplete_Gamma}.   
\end{em}
\end{example}

\begin{remark}\label{Remark_tempered}
\begin{em}
Suppose that the tempering function $q(x)$ in \eqref{q(x)} is given by the simple exponential tilting 
$	
	q(x) =\beta e^{-\lambda x}/{\Gamma(1-\beta)},
$
where $\lambda>0$ is a fixed constant. Then the Laplace exponent in \eqref{def_LaplaceExponent} takes the form $\psi(s)=(s+\lambda)^\beta-\lambda^\beta$, and using \eqref{Incomplete_Gamma}, one can write the constant $\nu(1,\infty)$ in \eqref{nu_tempered} as
\begin{align*}
	\nu(T,\infty)=\dfrac{e^{-\lambda T}T^{-\beta}-\lambda^\beta \Gamma(1-\beta,\lambda T)}{\Gamma(1-\beta)}.
\end{align*}
Letting $\lambda\downarrow 0$ yields $\nu(T,\infty)=T^{-\beta}/\Gamma(1-\beta)$, which coincides with the constant found in \eqref{nu_stable} for the inverse stable subordinator; this makes sense since a tempered stable subordinator with the tempering factor $\lambda$ set to be 0 is merely a stable subordinator.  
\end{em}
\end{remark}

\section{Extensions}\label{Section_Extensions}

This section establishes small ball probabilities for a large class of time-changed self-similar processes which includes the time-changed Brownian motions discussed in the previous section. 

Let $X=(X(t))_{t\ge 0}$ be a self-similar process starting at 0 and extend $X$ for $t<0$ using an independent copy; i.e.\ let $X'$ be an independent copy of $X$ and set $X(t):=X'(-t)$ for $t<0$. We call the so-defined process $X=(X(t))_{t\in\mathbb{R}}$ a \textit{two-sided} process. Let $E=(E(t))_{t\ge 0}$ be an independent continuous process starting at 0 which is not necessarily nondecreasing; this implies $E$ may take negative values. 
In the next theorem, the notation $f(x)\approx g(x)$ means that $0<\liminf f(x)/g(x)\le \limsup f(x)/g(x)<\infty$. 
The proof employs an idea presented in the proof of Theorem 1 of \cite{AurzadaLifshits}.

\begin{theorem}\label{Theorem_main4}
Let $X$ be a two-sided self-similar process starting at 0 of index $H>0$ such that
\begin{align}\label{small_X_weak}
	-\log \mathbb{P}\Bigl(\sup_{0\le t\le 1} |X(t)|\le \epsilon \Bigr)
	\approx \epsilon^{-\tau} \ \ \textrm{as} \ \ \epsilon\downarrow 0
\end{align}
for some $\tau>0$. 
 Let $E$ be a continuous process starting at 0, independent of $X$, such that 
\begin{align}\label{small_E(1)_weak}
	\mathbb{P}(\sup_{0\le t\le T}|E(t)|\le \epsilon) 
	\approx \epsilon^\sigma \ \ \textrm{as} \ \ \epsilon\downarrow 0
\end{align}
for some $T>0$ and $\sigma>0$. Then 
\begin{align}\label{small_ball_3_weak}
	\mathbb{P}\Bigl(\sup_{0\le t\le T} |X(E(t))|\le \epsilon \Bigr)
	\approx \epsilon^{\sigma/H} \ \ \textrm{as} \ \ \epsilon\downarrow 0.
\end{align}
\end{theorem}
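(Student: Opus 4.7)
The plan is to mimic the two-sided argument from the proof of Theorem~1 in \cite{AurzadaLifshits}: reduce the small-ball probability of $X\circ E$ to a Laplace-transform-type functional of $\sup_{0\le t\le T}|E(t)|$, then close with a one-sided Tauberian calculation. First I would set $A:=(-\inf_{0\le t\le T}E(t))\vee 0$ and $B:=(\sup_{0\le t\le T}E(t))\vee 0$, so $M:=\sup_{0\le t\le T}|E(t)|=\max(A,B)$ and \eqref{small_E(1)_weak} reads $\mathbb{P}(M\le \epsilon)\approx \epsilon^\sigma$. Since $E$ is continuous with $E(0)=0$, the intermediate value theorem identifies its range on $[0,T]$ with the interval $[-A,B]$, so
\[
\sup_{0\le t\le T}|X(E(t))| \;=\; \max\Bigl(\sup_{0\le s\le B}|X(s)|,\;\sup_{-A\le s\le 0}|X(s)|\Bigr).
\]
The two-sided construction makes $X|_{[0,\infty)}$ and $X|_{(-\infty,0]}$ independent, so conditionally on $E$ the two suprema above are independent; self-similarity then yields, with $F(\delta):=\mathbb{P}(\sup_{0\le s\le 1}|X(s)|\le \delta)$ (and the convention $F(+\infty)=1$ when $A=0$ or $B=0$),
\[
\mathbb{P}\bigl(\sup_{0\le t\le T}|X(E(t))|\le \epsilon \bigm| E\bigr) \;=\; F(\epsilon B^{-H})\,F(\epsilon A^{-H}).
\]

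Next I would invoke \eqref{small_X_weak} to fix $0<c_1\le c_2$ and $\epsilon_0>0$ with $\exp(-c_2\delta^{-\tau})\le F(\delta)\le \exp(-c_1\delta^{-\tau})$ for $\delta\in(0,\epsilon_0)$, while $F\le 1$ always. Using the elementary sandwich $M^{H\tau}\le A^{H\tau}+B^{H\tau}\le 2M^{H\tau}$, on the event $\{M>(\epsilon/\epsilon_0)^{1/H}\}$ the product $F(\epsilon B^{-H})F(\epsilon A^{-H})$ is squeezed between $\exp(-2c_2\epsilon^{-\tau}M^{H\tau})$ and $\exp(-c_1\epsilon^{-\tau}M^{H\tau})$; on the complementary small-$M$ event, trivial bounds on $F$ together with $\mathbb{P}(M\le (\epsilon/\epsilon_0)^{1/H})=O(\epsilon^{\sigma/H})$ absorb the discrepancy into an error matching the claimed rate. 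Taking expectations produces
\[
\mathbb{E}\bigl[e^{-2c_2\epsilon^{-\tau}M^{H\tau}}\bigr] - O(\epsilon^{\sigma/H}) \;\lesssim\; \mathbb{P}\Bigl(\sup_{0\le t\le T}|X(E(t))|\le \epsilon\Bigr) \;\lesssim\; \mathbb{E}\bigl[e^{-c_1\epsilon^{-\tau}M^{H\tau}}\bigr] + O(\epsilon^{\sigma/H}).
\]

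What remains is the asymptotic $\mathbb{E}[e^{-\lambda M^{H\tau}}]\approx \lambda^{-\sigma/(H\tau)}$ as $\lambda\to\infty$, for then inserting $\lambda\asymp \epsilon^{-\tau}$ delivers the target rate $\epsilon^{\tau\cdot\sigma/(H\tau)}=\epsilon^{\sigma/H}$. Setting $U:=M^{H\tau}$, hypothesis \eqref{small_E(1)_weak} gives $\mathbb{P}(U\le u)\approx u^{\sigma/(H\tau)}$ near $0$; I would then write $\mathbb{E}[e^{-\lambda U}]=\lambda\int_0^\infty e^{-\lambda u}\mathbb{P}(U\le u)\,du$, split the integral at a small threshold $u_0$, and substitute $v=\lambda u$ in the near-origin piece, which converts the polynomial small-ball behavior into a rescaled $\Gamma$-factor of order $\lambda^{-\sigma/(H\tau)}$ while the tail contributes only $O(e^{-\lambda u_0})$. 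This is an elementary $\approx$-analogue of Lemma~\ref{Lemma_Tauberian_2}. The hard part will be the first paragraph: correctly exploiting the two-sided independence and self-similarity to factor the conditional small-ball probability so that it depends on $(A,B)$ essentially only through $M$, and cleanly controlling the boundary regime where one of $A, B$ is close to zero; once that decoupling is achieved, the Laplace-transform step is routine.
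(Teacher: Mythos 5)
Your overall plan is the same as the paper's: condition on $E$, use continuity plus two-sided independence plus self-similarity of $X$ to reduce the conditional small-ball probability to a product of one-dimensional small-ball probabilities evaluated at $\epsilon/A^{H}$ and $\epsilon/B^{H}$, sandwich by exponentials, and finish with a Tauberian computation. Your elementary rederivation of the weak-order Tauberian lemma via $\mathbb{E}[e^{-\lambda U}]=\lambda\int_0^\infty e^{-\lambda u}\mathbb{P}(U\le u)\,\mathrm{d}u$ and a change of variables is fine, and working with $\sup_{0\le t\le T}|E(t)|$ instead of the oscillation $\sup_{0\le s,t\le T}|E(t)-E(s)|$ (as the paper does) is a harmless cosmetic change since the two are comparable within a factor of two.

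However, there is a genuine gap in the way you handle the regime where the argument of $F$ may exceed $\epsilon_0$. Your lower bound reads
\[
\mathbb{E}\bigl[e^{-2c_2\epsilon^{-\tau}M^{H\tau}}\bigr]-O(\epsilon^{\sigma/H})\ \lesssim\ \mathbb{P}\Bigl(\sup_{0\le t\le T}|X(E(t))|\le\epsilon\Bigr),
\]
with the error term coming from $\mathbb{P}\bigl(M\le(\epsilon/\epsilon_0)^{1/H}\bigr)$. But the main term and the subtracted error are of \emph{the same} order $\epsilon^{\sigma/H}$, and since the implied constants in \eqref{small_E(1)_weak} are not at your disposal, the difference can be nonpositive. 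So this inequality gives no lower bound of the form $\gtrsim\epsilon^{\sigma/H}$. The paper sidesteps this entirely by a small but crucial preliminary step: from the validity of $e^{-c_1\epsilon^{-\tau}}\le F(\epsilon)\le e^{-c_2\epsilon^{-\tau}}$ only on $(0,\epsilon_0]$, set $c_3:=e^{-c_1\epsilon_0^{-\tau}}$ and $c_4:=e^{c_2\epsilon_0^{-\tau}}$; then
\[
c_3\,e^{-c_1\epsilon^{-\tau}}\ \le\ F(\epsilon)\ \le\ c_4\,e^{-c_2\epsilon^{-\tau}}\quad\text{for \emph{all} }\epsilon>0.
\]
(For $\epsilon\ge\epsilon_0$ the left side is at most $c_3\le F(\epsilon_0)\le F(\epsilon)$ and the right side is at least $1$.) With these global bounds there is no need to split on the size of $M$; conditioning and taking expectations gives directly
\[
c_3^2\,\mathbb{E}\bigl[e^{-c_1\epsilon^{-\tau}(A^{H\tau}+B^{H\tau})}\bigr]\ \le\ \mathbb{P}\Bigl(\sup_{0\le t\le T}|X(E(t))|\le\epsilon\Bigr)\ \le\ c_4^2\,\mathbb{E}\bigl[e^{-c_2\epsilon^{-\tau}(A^{H\tau}+B^{H\tau})}\bigr],
\]
and the elementary comparison $A^{H\tau}+B^{H\tau}\asymp M^{H\tau}$ plus your Tauberian step then finishes both sides cleanly, with the positive prefactors $c_3^2$ and $c_4^2$ surviving into the limit. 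You could also rescue your version by a shell argument — integrate over an annulus $\{M\in(\delta,K\delta]\}$ with $K$ chosen so that the lower $\approx$ constant beats the upper one — but the paper's uniform-bound trick is shorter and is what its proof actually uses; incorporating it would close the gap.
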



\begin{proof}
For any $\theta>0$, assumption \eqref{small_E(1)_weak} is equivalent to 
\[
	\mathbb{P}(\sup_{0\le t\le T}|E(t)|^{1/\theta}\le \epsilon)
	\approx \epsilon^{\theta\sigma} \ \ \textrm{as} \ \ \epsilon\downarrow 0. 
\]
which, by the weak order analogue of Lemma \ref{Lemma_Tauberian_2} (see the discussion given in Chapter V of \cite{Widder}) with $V=\sup_{0\le t\le T}|E(t)|^{1/\theta}$, implies that
\begin{align*}
	\mathbb{E}[e^{-a\sup_{0\le t\le T}|E(t)|^{1/\theta}}]\approx a^{-\theta\sigma} \ \ \textrm{as} \ \ a\to\infty.
\end{align*}
This is equivalent to 
\begin{align}\label{Tauberian2}
	\mathbb{E}[e^{-a\sup_{0\le s,t\le T}|E(t)-E(s)|^{1/\theta}}]\approx a^{-\theta\sigma} \ \ \textrm{as} \ \ a\to\infty
\end{align}
due to the inequalities
\[
	\dfrac 12 \sup_{0\le s,t\le T} |E(t)-E(s)|\le \sup_{0\le t\le T}|E(t)|
	= \sup_{0\le t\le T}|E(t)-E(0)| \le \sup_{0\le s,t\le T} |E(t)-E(s)|.
\]
Now, by assumption \eqref{small_X_weak}, there exist constants $c_1,c_2,\epsilon_0>0$ such that for all $\epsilon\in(0,\epsilon_0]$, 
\[
	e^{-c_1\epsilon^{-\tau}}
	\le \mathbb{P}\Bigl(\sup_{0\le t\le 1} |X(t)|\le \epsilon \Bigr)
	\le e^{-c_2\epsilon^{-\tau}}.
\]
Setting $c_3:=e^{-c_1 \epsilon_0^{-\tau}}$ and $c_4:=e^{c_2 \epsilon_0^{-\tau}}$,
 we observe that for \textit{all} $\epsilon>0$, 
\begin{align}\label{key_estimate}
	c_3e^{-c_1\epsilon^{-\tau}}
	\le \mathbb{P}\Bigl(\sup_{0\le t\le 1} |X(t)|\le \epsilon \Bigr)
	\le c_4e^{-c_2\epsilon^{-\tau}}.
\end{align}
Let $N:=\inf_{0\le t\le T} E(t)$ and $M:=\sup_{0\le t\le T} E(t)$. The assumption that $E(0)=0$ implies that $N\le 0$ and $M\ge 0$. For $\epsilon>0$, using continuity of $E$, independence between $(X(t))_{t> 0}$ and $(X(t))_{t<0}$, independence between $X$ and $E$, and the self-similarity of $X$, we observe that
\begin{align*}
	\mathbb{P}\Bigl(\sup_{0\le t\le T} |X(E(t))|\le \epsilon \Bigr)
	&=\mathbb{P}\Bigl(\sup_{N\le s\le 0} |X(s)|\le \epsilon, \sup_{0\le s\le M} |X(s)|\le \epsilon \Bigr)\\
	&=\mathbb{E}\biggl[\mathbb{P}\Bigl(\sup_{N\le s\le 0} |X(s)|\le \epsilon \Big| E \Bigr) \mathbb{P}\Bigl( \sup_{0\le s\le M} |X(s)|\le \epsilon \Big| E \Bigr)\biggr]\\
	&=\mathbb{E}\biggl[\mathbb{P}\Bigl(\sup_{0\le s\le 1} |X(s)|\le \dfrac{\epsilon}{(-N)^H}  \hspace{1pt}\Big|\hspace{1pt} E \Bigr)\mathbb{P}\Bigl(\sup_{0\le s\le 1} |X(s)|\le \dfrac{\epsilon}{M^H}  \hspace{1pt}\Big|\hspace{1pt} E \Bigr)\biggr].
\end{align*}
By the upper bound in \eqref{key_estimate} and the elementary inequality 
$(x+y)^{p}\le d(p) (x^{p}+y^{p})$, $x,y\ge 0$, with $p=\tau H$, it follows that
\begin{align*}
	\epsilon^{-\sigma/H}\mathbb{P}\Bigl(\sup_{0\le t\le T} |X(E(t))|\le \epsilon \Bigr)
	&\le c_4^2\epsilon^{-\sigma/H}\mathbb{E}\Bigl[e^{-c_2\epsilon^{-\tau}\{(-N)^{\tau H}+M^{\tau H}\}} \Bigr]\\	
	&\le c_4^2\epsilon^{-\sigma/H}\mathbb{E}\Bigl[e^{-\bar{c}_2 \epsilon^{-\tau}(M-N)^{\tau H}} \Bigr]\\
	&= c_4^2\epsilon^{-\sigma/H}\mathbb{E}\Bigl[e^{-\bar{c}_2 \epsilon^{-\tau}\sup_{0\le s,t\le T}|E(t)-E(s)|^{\tau H}} \Bigr]\\
	&=\dfrac{c_4^2}{(\bar{c}_2)^{\theta\sigma}}\varphi_{T,\theta,\sigma}(\bar{c}_2\epsilon^{-\tau}),
\end{align*}
where $\bar{c}_2:=c_2/d(\tau H)$, $\theta:=1/(\tau H)$ and 
\[ 
	\varphi_{T,\theta,\sigma}(a):=a^{\theta\sigma} \mathbb{E}[e^{-a \sup_{0\le s,t\le T}|E(t)-E(s)|^{1/\theta}}].
\]
Now, \eqref{Tauberian2} implies that $\limsup_{a\to\infty} \varphi_{T,\theta,\sigma}(a)<\infty$, and hence, the desired upper bound follows.  
The lower bound is obtained in a similar manner. 
\end{proof}

\begin{remark}\label{Remark_main4}
\begin{em}
1) The rate of decay of the small ball probability of $X\circ E$ in \eqref{small_ball_3_weak} does not depend on $\tau$ appearing in \eqref{small_X_weak}; the information of $\tau$ is reflected on the constant $\bar{c}_2$ introduced in the proof. 

2) Unlike Theorem 4 of \cite{AurzadaLifshits}, a simple modification of the above proof does not lead to a similar result concerning strong deviation orders (i.e.\ a result with $\approx$ replaced by $\sim$). Indeed, if we assume (instead of \eqref{small_X_weak}) that
\begin{align*}
	-\log \mathbb{P}\Bigl(\sup_{0\le t\le 1} |X(t)|\le \epsilon \Bigr)
	\sim k \epsilon^{-\tau} \ \ \textrm{as} \ \ \epsilon\downarrow 0
\end{align*}
for some $k>0$, then for any $\delta\in(0,1)$, we can find constants $c_3$ and $c_4$ such that 
\[
	c_3 e^{-k  (1+\delta)\epsilon^{-\tau}}
	\le \mathbb{P}\Bigl(\sup_{0\le t\le 1} |X(t)|\le \epsilon \Bigr)
	\le c_4 e^{-k  (1-\delta)\epsilon^{-\tau}}
\]
for all $\epsilon>0$. This leads to 
\begin{align}\label{observation_strong}
	\dfrac{c_3^2}{(\bar{c}_1)^{\theta\sigma}}\varphi_{T,\theta,\sigma}(\bar{c}_1\epsilon^{-\tau})
	\le \epsilon^{-\sigma/H}\mathbb{P}\Bigl(\sup_{0\le t\le T} |X(E(t))|\le \epsilon \Bigr)
	\le \dfrac{c_4^2}{(\bar{c}_2)^{\theta\sigma}}\varphi_{T,\theta,\sigma}(\bar{c}_2\epsilon^{-\tau}),
\end{align}
where  $\bar{c}_1:=k(1+\delta)/d(\tau H)$, $\bar{c}_2:=k(1-\delta)/d(\tau H)$, and $\varphi_{T,\theta,\sigma}(a)$ is as in the above proof. Now, if we further assume a strong deviation condition for the time change $E$, then $\varphi_{T,\theta,\sigma}(a)$ approaches a constant as $a\to \infty$; however, since the constants $c_3$ and $c_4$ depend on $\delta$ and do not generally approach the same value as $\delta\to 0$,  a strong  result for the small ball probability for $X\circ E$ does not follow from \eqref{observation_strong}. 
Note that this issue does not occur in the proof of Theorem 4 of \cite{AurzadaLifshits} since the logarithmic deviation is discussed in that theorem. 
On the other hand, 
in Theorem \ref{Theorem_main2}, the explicit formula for the small ball probability of the Brownian motion (valid for each fixed $\epsilon>0$) allowed us to establish a strong deviation result. 
\end{em}
\end{remark}

We now consider some specific outer processes $X$ that can be handled within the setting of Theorem \ref{Theorem_main4}. Some of the examples below show that Theorem \ref{Theorem_main4} indeed generalizes Theorem 2.3 of \cite{Nane_LIL2009}.

Well-known examples of self-similar processes which have logarithmic small deviation orders include a fractional Brownian motion and a symmetric stable L\'evy process. Namely, if $W_H$ denotes a fractional Brownian motion with Hurst index $H\in(0,1)$, i.e.\ $W_H$ is a zero mean Gaussian process with covariance function $\mathbb{E}[W_H(s)W_H(t)]= (s^{2H}+t^{2H}-|s-t|^{2H})/2$, then 
$W_H$ is a self-similar process of index $H$ with small deviation order given by
\begin{align*}
	-\log \mathbb{P}\Bigl(\sup_{0\le t\le 1} |W_H(t)|\le \epsilon \Bigr)
	\sim c_H \hspace{1pt}\epsilon^{-1/H} \ \ \textrm{as} \ \ \epsilon\downarrow 0,
\end{align*}
where $c_H$ is a positive constant depending on $H$. An explicit representation of the small deviation constant $c_H$ is found in 
\cite{LiLinde1998}. 

On the other hand, if $S_\alpha$ is a symmetric stable L\'evy process of stability index $\alpha\in(0,2]$, i.e.\ $S_\alpha$ is a L\'evy process with characteristic function 
$
	\mathbb{E}[e^{iu S_\alpha(t)}]=e^{- t \kappa^\alpha |u|^\alpha}
$
for some positive constant $\kappa$ (see e.g.\ \cite{Applebaum,Sato}), then $S_\alpha$ is a self-similar process of index $H=1/\alpha$ and
\[	
	-\log \mathbb{P}\Bigl(\sup_{0\le t\le 1} |S_\alpha(t)|\le \epsilon \Bigr)
	\sim \lambda_\alpha\hspace{1pt} \epsilon^{-\alpha} \ \ \textrm{as} \ \ \epsilon\downarrow 0,
\]
where $\lambda_\alpha>0$ is some constant; see \cite{Mogulskii} for details. 

Both of these examples satisfy condition \eqref{small_X_weak} with $\tau=1/H$, and they can also be handled by Theorem 2.3 of \cite{Nane_LIL2009}. However, self-similar processes with index $H$ with $\tau\ne 1/H$ also exist as the following examples show. These processes are outside the scope of Theorem 2.3 of \cite{Nane_LIL2009}, but Theorem \ref{Theorem_main4} still applies.


\begin{example}[An iterated fractional Brownian motion as an outer process]
\begin{em}
An \textit{$n$-iterated two-sided fractional Brownian motion} is the process $X^{(n)}$ defined by the iteration
\[
	X^{(1)}(t):=W_{H_1}(t), \ \  X^{(j)}(t):=W_{H_{j}}(X^{(j-1)}(t)), \ \ j=2,\ldots, n,
\]
where $W_{H_1},\ldots,W_{H_n}$ are independent two-sided fractional Brownian motions with Hurst indices $H_1,\ldots,H_n$ and small deviation constants $c_{H_1},\ldots,c_{H_n}$, respectively. 
The process $X^{(n)}$ is self-similar with index $H_{(n)}:=\prod_{j=1}^n H_j$. Moreover, it is established in Section 4.2 of \cite{AurzadaLifshits} that 
\begin{align*}
	-\log \mathbb{P}\Bigl(\sup_{0\le t\le 1} |X^{(n)}(t)|\le \epsilon \Bigr)
	\sim c_n \hspace{1pt}\epsilon^{-\tau_n} \ \ \textrm{as} \ \ \epsilon\downarrow 0,
\end{align*}
where $\tau_n:=1/\sum_{i=1}^n \prod_{j=i}^n H_j$ and $c_n$ is defined iteratively by
\[
	c_1:= c_{H_1}, \ \ c_j:=(1+\tau_{j-1})\Bigl[c_{j-1}^{1/\tau_{j-1}}\dfrac{2 c_{H_j}}{\tau_{j-1}}\Bigr]^{\tau_{j-1}/(1+\tau_{j-1})}, \ \ j=2,\ldots, n.
\]
Hence, condition \eqref{small_X_weak} holds with $\tau=\tau_n\ne 1/H_{(n)}$. 
\end{em}
\end{example}


\begin{example}[An iterated strictly stable L\'evy process as an outer process]
\begin{em}
Let $S_{\alpha_1}$ be a two-sided strictly stable L\'evy process of index $\alpha_1\in(0,2]$. Let $S_{\alpha_2}$ be an independent strictly stable L\'evy process of index $\alpha_2\in(0,2]$ which is not a subordinator. We call the process $X:=S_{\alpha_1}\circ S_{\alpha_2}$ an \textit{iterated strictly stable L\'evy process}. It is easy to see that $X$ is self-similar with index $H=1/(\alpha_1\alpha_2)$. Moreover, it is shown in Section 5 of \cite{AurzadaLifshits} that
\[	
	-\log \mathbb{P}\Bigl(\sup_{0\le t\le 1} |S_{\alpha_1}(S_{\alpha_2}(t))|\le \epsilon \Bigr)
	\approx \epsilon^{-\alpha_1 \alpha_2/(1+\alpha_2)} \ \ \textrm{as} \ \ \epsilon\downarrow 0.
\]
Hence, condition \eqref{small_X_weak} holds with $\tau=\alpha_1 \alpha_2/(1+\alpha_2)\ne \alpha_1\alpha_2=1/H$.  
\end{em}
\end{example}

Theorem \ref{Theorem_main4} also allows us to consider time changes which are given by mixtures of independent inverse subordinators.

\begin{example}[A mixture of independent inverse subordinators as a time change]
\begin{em}
For each $j=1,\ldots,m$, let $E_j$ be the inverse of a subordinator $D_j$ with infinite L\'evy measure $\nu_j$ having no atom at $T>0$ so that  \eqref{small_E(1)_sub} holds for each $E_j$. Assume that $E_j$'s are independent and let $E:=\sum_{j=1}^m c_jE_j$, where $c_j$'s are positive constants. Then it follows from Lemma \ref{Lemma_Tauberian_2} that
\[
	\mathbb{E}[e^{-a E(T)}]=\prod_{j=1}^m \mathbb{E}[e^{-ac_j E_j(T)}]\sim \biggl(\prod_{j=1}^m \dfrac{\nu_j(T,\infty)}{c_j}\biggr) a^{-m}  \ \ \textrm{as} \ \ a\to \infty,
\]
which, again by Lemma \ref{Lemma_Tauberian_2}, is equivalent to 
\[
	\mathbb{P}(E(T)\le \epsilon)\sim \dfrac{1}{m!}\biggl(\prod_{j=1}^m \dfrac{\nu_j(T,\infty)}{c_j}\biggr) \epsilon^m \ \ \textrm{as} \ \ \epsilon\downarrow 0.
\]
Hence, Theorem \ref{Theorem_main4} applies to the time change $E$ with $\sigma=m$.  Moreover, with this specific time change, it is possible to generalize Theorems \ref{Theorem_main2}  to obtain the small ball probability of the time-changed Brownian motion with the exact small deviation constant specified. The proof simply combines the ideas used in the proofs of Theorems \ref{Theorem_main2} and \ref{Theorem_main4} and hence is omitted.

Note that even if each $D_j$ is a stable subordinator, the time change $E$ defined in this example does \textit{not} coincide with the inverse of a mixture of independent stable subordinators appearing in \cite{HKU-2,HKU-1,MS_2}. Indeed, in those papers, $E$ is defined to be the inverse of $D:=\sum_{j=1}^m c_jD_j$, where $D_j$'s are independent stable subordinators, which implies that it has the small ball probability with $\sigma=1$ due to Proposition \ref{Proposition_E}. 
\end{em}
\end{example}

\section*{Appendix}\label{Appendix}

\begin{proof}[Proof of Proposition \ref{Proposition_E}]
By the inverse relationship between $E$ and $D$,
it follows that $\mathbb{P}(E(T)\le \epsilon)=\mathbb{P}(D(\epsilon)\ge T)$. 
Hence, we only need to verify that 
\begin{align*}
	\lim_{\epsilon\downarrow 0}\dfrac{\mathbb{P}(D(\epsilon)\ge T)}{\epsilon}
	=\nu(T,\infty) \ \ \textrm{provided that} \ \ \nu(\{T\})=0.
\end{align*}
Although this may be a well-known fact, for the sake of completeness of the discussion as well as clarification of why the assumption that $\nu(\{T\})=0$ is needed, we provide a proof below. Note that a similar argument appears in \cite{Hettmansperger}.

For a fixed real sequence $\{\epsilon_n\}$ with $\epsilon_n\downarrow 0$, let 
\[
	\bar{\nu}_n(x):=\dfrac{\mathbb{P}(D(\epsilon_n)\ge x)}{\epsilon_n} \ \ \textrm{and} \ \ 
	\bar{\nu}(x):=\nu(x,\infty)
\]
 for $x>0$.
Then the proof of Theorem 1.2(i) of \cite{Bertoin_subordinator} shows that the sequence of absolutely continuous measures $\bar{\nu}_n(x)\hspace{1pt}\textrm{d}x$ converges vaguely to $b\hspace{1pt} \delta_0(\textrm{d}x)+\bar{\nu}(x)\hspace{1pt}\textrm{d}x$, where $b\ge 0$ is the drift parameter appearing in \eqref{def_LaplaceExponent} and $\delta_0$ is the Dirac measure with mass at 0. This particularly implies that for any $0<c<d$, 
\[
	\lim_{n\to \infty}\int_c^d \bar{\nu}_n (x)\hspace{1pt}\textrm{d}x=\int_c^d \bar{\nu} (x)\hspace{1pt}\textrm{d}x.
\]

Now, assume that $\bar{\nu}_n(T)$ does not converge to $\bar{\nu}(T)$. Then there exist a constant $\eta>0$ and a subsequence $\{n_k\}$ such that $|\bar{\nu}_{n_k}(T)-\bar{\nu}(T)|\ge \eta$ for all $k$. This implies that there exists a further subsequence $\{n_{k_m}\}$ such that (i) $\bar{\nu}_{n_{k_m}}(T)\ge \bar{\nu}(T)+\eta$ for all $m$ or (ii) $\bar{\nu}_{n_{k_m}}(T)\le \bar{\nu}(T)-\eta$ for all $m$. If (i) holds, then since each $\bar{\nu}_{n_{k_m}}(\cdot)$ is a decreasing function, for any $\delta\in(0,T)$,
\begin{align}\label{nubar_1}
	\int_{T-\delta}^T \bar{\nu} (x)\hspace{1pt} \textrm{d}x
	=\lim_{m\to \infty}\int_{T-\delta}^T \bar{\nu}_{n_{k_m}} (x)\hspace{1pt} \textrm{d}x
	\ge \lim_{m\to \infty}\delta \bar{\nu}_{n_{k_m}}(T)
	\ge \delta(\bar{\nu}(T)+\eta). 
\end{align}
On the other hand, the assumption that $\nu(\{T\})=0$ implies that $\bar{\nu}(\cdot)$ is continuous at $x=T$; hence, there exists a constant $\delta\in(0,T)$ such that $\bar{\nu}(x)-\bar{\nu}(T)\le \eta/2$ for all $x$ with $T-\delta\le x\le T$. Thus, we have
\[
	\int_{T-\delta}^T \bar{\nu} (x)\hspace{1pt} \textrm{d}x\le \delta(\bar{\nu}(T)+\eta/2),
\]
which contradicts the estimate in \eqref{nubar_1}. A similar contradiction occurs if (ii) holds. Therefore, $\bar{\nu}_n(T)$ must converge to $\bar{\nu}(T)$, which completes the proof. 
\end{proof}

\begin{proof}[Proof of Lemma \ref{Lemma_laplace}]
For a fixed $x>0$, since $E$ is the inverse of $D$, 
\[
  \mathbb{P}(E(t)\le x)
  =\mathbb{P}(D(x)\ge t)
   =1-\mathbb{P}(D(x)< t), \ \ t>0.
\]
Taking the Laplace transform with respect to $t$ on both sides, we obtain 
\[
	\mathcal{L}_t\bigl[\mathbb{P}(E(t)\le x)\bigr](s)
	=\dfrac{1}{s}-\dfrac{1}{s}\mathcal{L}_t[\mathbb{P}(D(x)\in \textrm{d}t)](s) 
	=\dfrac{1-\mathbb{E}[e^{-s D(x)}]}{s}
	=\dfrac{1-e^{-x\psi(s)}}{s}, \ \ s>0,
\]
where $\mathcal{L}_t[f(t)]$ and $\mathcal{L}_t[\mu(\textrm{d}t)]$ denote the Laplace transforms of a function $f(t)$ and a measure $\mu(\textrm{d}t)$, respectively. 
The right hand side of the above identity being differentiable with respect to $x$, so is the left hand side, and 
\begin{align*}
	\mathcal{L}_t\bigl[\mathbb{P}(E(t)\in \textrm{d}x)\bigr](s)
	=\dfrac{\psi(s)}{s}e^{-x\psi(s)}\,\textrm{d}x, \ \ s>0.
\end{align*}
Hence, we obtain by the Fubini theorem (for nonnegative integrands) that 
\begin{align*}
	\mathcal{L}_t \big[\mathbb{E}[e^{-a E(t)}]\bigr](s)
	= \int_0^\infty \dfrac{\psi(s)}{s}e^{-x(\psi(s)+a)} \,\textrm{d}x
	=\dfrac{\psi(s)}{s}(\psi(s)+a)^{-1}, \ \ s>0,
\end{align*}
which completes the proof.
\end{proof}

\vspace{3mm}
\noindent
{\textbf{Acknowledgments:}}
The author would like to thank Professor Jan Rosi\'nski and Professor Xia Chen of the University of Tennessee  for helpful discussions.

  \bibliographystyle{plain} 
  \bibliography{KobayashiK}

\begin{thebibliography}{10}

\bibitem{Applebaum}
D.~Applebaum.
\newblock {\em \textit{L{\'e}vy Processes and Stochastic Calculus}}.
\newblock Cambridge University Press, second edition, 2009.

\bibitem{AurzadaLifshits}
F.~Aurzada and M.~Lifshits.
\newblock On the small deviation problem for some iterated processes.
\newblock {\em \textit{Electron. J. Probab.}}, 14:1992--2010, 2009.

\bibitem{BWM}
D.~A. Benson, S.~W. Wheatcraft, and M.~M. Meerschaert.
\newblock Application of a fractional advection-dispersion equation.
\newblock {\em \textit{Water Resour. Res.}}, 36(6):1403--1412, 2000.

\bibitem{Bertoin_subordinator}
J.~Bertoin.
\newblock {\em Subordinators: Examples and Applications}, volume 1717 of {\em
  Lectures on Probability Theory and Statistics, Lecture Notes in Mathematics}.
\newblock Springer, 1999.

\bibitem{Bingham}
N.~H. Bingham.
\newblock Limit theorems for occupation times of markov processes.
\newblock {\em \textit{Z. Wahrsch. Verw. Gebiete}}, 17:1--22, 1971.

\bibitem{Chung}
K.~L. Chung.
\newblock On the maximum partial sums of sequences of independent random
  variables.
\newblock {\em \textit{Trans. Amer. Math. Soc.}}, 64:205--233, 1948.

\bibitem{GMSR}
R.~Gorenflo, F.~Mainardi, E.~Scalas, and M.~Raberto.
\newblock Fractional calculus and continuous-time finance {III}: the diffusion
  limit.
\newblock {\em \textit{Mathematical Finance}, Trends in Mathematics}, pages
  171--180, 2001.

\bibitem{HKRU}
M.~G. Hahn, K.~Kobayashi, J.~Ryvkina, and S.~Umarov.
\newblock On time-changed {G}aussian processes and their associated
  {F}okker--{P}lanck--{K}olmogorov equations.
\newblock {\em \textit{Elect. Comm. in Probab.}}, 16:150--164, 2011.

\bibitem{HKU-2}
M.~G. Hahn, K.~Kobayashi, and S.~Umarov.
\newblock Fokker--{P}lanck--{K}olmogorov equations associated with time-changed
  fractional {B}rownian motion.
\newblock {\em \textit{Proc. Amer. Math. Soc.}}, 139(2):691--705, 2011.

\bibitem{HKU-1}
M.~G. Hahn, K.~Kobayashi, and S.~Umarov.
\newblock {SDE}s driven by a time-changed {L}{\'e}vy process and their
  associated time-fractional order pseudo-differential equations.
\newblock {\em \textit{J. Theoret. Probab.}}, 25(1):262--279, 2012.

\bibitem{Hettmansperger}
T.~P. Hettmansperger and L.~A. Klimko.
\newblock A note on the strong convergence of distributions.
\newblock {\em \textit{Ann. Stat.}}, 2(3):597--598, 1974.

\bibitem{Janczura}
J.~Janczura and A.~Wi{\l}oma{\'n}ska.
\newblock Anomalous diffusion models: different types of subordinator
  distribution.
\newblock {\em \textit{Acta Phys. Pol.}}, 43(5), 2012.

\bibitem{Kobayashi}
K.~Kobayashi.
\newblock Stochastic calculus for a time-changed semimartingale and the
  associated stochastic differential equations.
\newblock {\em \textit{J. Theoret. Probab.}}, 24(3):789--820, 2011.

\bibitem{LiLinde1998}
W.~V. Li and W.~Linde.
\newblock Existence of small ball constants for fractional {B}rownian motions.
\newblock {\em \textit{C. R. Acad. Sci. Paris}}, 326(11):1329--1334, 1998.

\bibitem{Magdziarz_BS}
M.~Magdziarz.
\newblock Black--{S}choles formula in subdiffusive regime.
\newblock {\em \textit{J. Stat. Phys.}}, 136:553--564, 2009.

\bibitem{Magdziarz_spa}
M.~Magdziarz.
\newblock Stochastic representation of subdiffusion processes with
  time-dependent drift.
\newblock {\em \textit{Stoch. Proc. Appl.}}, 119:3238--3252, 2009.

\bibitem{MS_1}
M.~M. Meerschaert and H-P. Scheffler.
\newblock Limit theorems for continuous-time random walks with infinite mean
  waiting times.
\newblock {\em \textit{J. Appl. Probab.}}, 41:623--638, 2004.

\bibitem{MS_2}
M.~M. Meerschaert and H-P. Scheffler.
\newblock Triangular array limits for continuous time random walks.
\newblock {\em \textit{Stoch. Proc. Appl.}}, 118:1606--1633, 2008.

\bibitem{MetzlerKlafter00}
R.~Metzler and J.~Klafter.
\newblock The random walk's guide to anomalous diffusion: a fractional dynamics
  approach.
\newblock {\em \textit{Phys. Rep.}}, 339(1):1--77, 2000.

\bibitem{Mogulskii}
A.~A. Mogul'skii.
\newblock Small deviations in a space of trajectories.
\newblock {\em \textit{Theor. Probab. Appl.}}, 19:726--736, 1974.

\bibitem{Nane_LIL2009}
E.~Nane.
\newblock Laws of the iterated logarithm for a class of iterated processes.
\newblock {\em \textit{Statist. Probab. Lett.}}, 79:1744--1751, 2009.

\bibitem{Podlubny}
I.~Podlubny.
\newblock {\em \textit{Fractional Differential Equations}}.
\newblock Academic Press, New York, 1999.

\bibitem{Rosinski_tempered}
J.~Rosinski.
\newblock Tempering stable processes.
\newblock {\em \textit{Stoch. Proc. Appl.}}, 117(6):677--707, 2007.

\bibitem{Sato}
K-i. Sato.
\newblock {\em \textit{L{\'e}vy Processes and Infinitely Divisible
  Distributions}}.
\newblock Cambridge University Press, 1999.

\bibitem{Saxton}
M.~J. Saxton and K.~Jacobson.
\newblock Single-particle tracking: applications to membrane dynamics.
\newblock {\em \textit{Annu. Rev. Biophys. Biomol. Struct.}}, 26:373--399,
  1997.

\bibitem{Widder}
D.~V. Widder.
\newblock {\em \textit{The Laplace Transform}}.
\newblock Princeton University Press, 1941.

\bibitem{Zaslavsky}
G.~M. Zaslavsky.
\newblock Fractional kinetic equation for {H}amiltonian chaos. chaotic
  advection, tracer dynamics and turbulent dispersion.
\newblock {\em \textit{Phys.\ D}}, 76:110--122, 1994.

\end{thebibliography}

\end{document}